\renewcommand{\baselinestretch}{1.2}
\newcommand{\NN}{\mathbb{N}}
\newcommand{\re}{\mathbb{R}}
\newcommand{\T}{\mathcal{T}}
\newcommand{\p}{\mathcal{P}}
\newcommand{\x}{\mathbf{x}}
\newcommand{\bv}{\mathbf{v}}
\newcommand{\eps}{\varepsilon}
\newcommand{\uno}{\mathds{1}}
\newcommand{\gr}{\varphi}
\theoremstyle{definition}
\newtheorem{theo}{Theorem}[section]
\newtheorem{rmk}[theo]{Remark}
\newtheorem{ex}[theo]{Example}
\newtheorem{lem}[theo]{Lemma}
\newtheorem{defn}[theo]{Definition}
\newtheorem{cor}[theo]{Corollary}
\newtheorem{prop}[theo]{Proposition}
\begin{document}

\title{Division Point Measures from Primitive Substitutions}
\author{Daniel Gon\c{c}alves\thanks{Partially supported by CNPq, Brasil}, \and Charles Starling\thanks{Supported by CNPq, Brasil  }}

\date{}
\maketitle
\begin{abstract}
In this note we extend results of Olli concerning limits of point measures arising from substitutions. We consider a general primitive substitution on a finite polygon set in $\re^2$ and show that limits of certain atomic measures each converge to Lebesgue measure. 
\end{abstract}

\section{Introduction}

Associating point measures to iterated substitutions goes back to Kakutani \cite{Ka76} who considered division of the unit interval into two subintervals $[0,\alpha)$ and $[\alpha,1]$. If one repeats this division (with the same ratio for each interval created) and places a point mass at each division point, the limit of these measures converges weak-$*$ to a measure which is mutually singular with Lebesgue measure unless $\alpha =\, ^1\hspace{-0.05cm}/_{2}$, in which case it equals Lebesgue measure.

In \cite{Ol12}, Olli generalized this to a family of substitutions in $\re^2$, namely Conway's pinwheel substitution and Sadun's generalization of it (see \cite{sa98}). Conway's original pinwheel substitution is a scheme for dividing a $(1,2,\sqrt5)$-triangle into five uniformly scaled copies of the original, and Sadun's generalization provides a way to substitute any right triangle into five triangles which are similar to the original, but not uniformly scaled. Since the triangles created by these divisions are similar to the original, the substitution can be repeated, thereby dividing the original triangle into smaller and smaller triangles. At each step $k$ in this process, Olli defines three measures $\xi_k$, $\rho_k$ and $\omega_k$ on the original triangle $\mathcal{R}$ based on the substitution. These generate three sequencees of measures that are related to the distribution of vertices and triangles. The measure $\xi_k$ places a point mass at the barycenter of each triangle created at step $k$, while $\rho_k$ and $\omega_k$ place a point masses at the vertices of the triangles. The measure $\omega_k$ simply puts the same weight at each vertex, while $\rho_k$ weights each vertex for each triangle which has a vertex which intersects it. Each measure is then normalized so as to be a probability measure. Olli then proves (\cite{Ol12}, Theorem 1) that each of these measures converges to the same measure in the weak-$*$ topology, and that this limiting measure is equal to (normalized) Lebesgue measure if and only if the original triangle was a $(1,2,\sqrt5)$-triangle, that is, when the similar triangles at each step were uniformly scaled copies of $\mathcal{R}$.

We extend this result of Olli to other substitutions with uniform scaling. That is, we consider a finite set $\p$ of polygons in $\re^2$ and a substitution rule $\omega$ on $\p$ which divides elements of $\p$ into copies of elements of $\p$ scaled down by a constant factor of $\lambda>0$. We assume that $\omega$ is primitive in the sense that there is a $k$ such that for any $p,q\in\p$ if we divide $p$, $k$ times, then it will contain a copy of $q$. This situation is quite common in the theory of substitution tilings such as the Penrose tiling, see \cite{AP98} and \cite{So98} for example. We assume that the elements of $\p$ are disjoint, and let $X$ be the union of its elements, so that $X$ is a disjoint union of a finite number of polygons embedded in $\re^2$. We assume that the elements of $\p$ are uniformly scaled so that the Lebesgue measure of $X$ is 1. The substitution $\omega$ can then be repeatedly applied to the space $X$, dividing each polygon into smaller ones, and we can define probability measures analogous to those defined in \cite{Ol12}. Again, $\xi_k$ will place a point mass at a consistently chosen internal point of each polygon created at stage $k$, while $\rho_k$ and $\sigma_k$ will place point masses at the vertices of the polygons. The measure $\sigma_k$ puts a single point mass at each vertex, while $\rho_k$ again places weights on the vertices based on how many polygons intersect it (see Definitions \ref{defxi}, \ref{defrho}, and \ref{defsigma} for the precise statements, and see Example \ref{penrosemeasures} for a clear picture of the weights that these measures give). We note that we change the name of Olli's $\omega_k$ to $\sigma_k$ because the letter $\omega$ is usually reserved for the substitution rule in the literature concerning substitution tilings. We prove the following.
\begin{theo}\label{maintheorem}
Each of the sequences $\{\xi_k\}$, $\{\rho_k\}$ and $\{\sigma_k\}$ converges weak-$*$ to Lebesgue measure.
\end{theo}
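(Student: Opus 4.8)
I would build the whole argument on the hierarchical self-similarity of the subdivision. Since $\omega$ scales uniformly by $\lambda$, every tile appearing at stage $k$ (a ``level-$k$ tile'') is a $\lambda^{k}$-scaled congruent copy of some $p\in\p$, the stage-$k$ subdivision refines the stage-$n$ one whenever $k\ge n$, and a level-$k$ tile contained in a level-$n$ tile $s$ is subdivided, as $k$ grows, in precisely the way the element of $\p$ it copies is. The bookkeeping device is the substitution matrix $M=(M_{pq})_{p,q\in\p}$, $M_{pq}$ the number of copies of $p$ in $\omega(q)$; primitivity of $\omega$ makes $M$ primitive, so Perron--Frobenius gives a simple eigenvalue $\Lambda$ with strictly positive left and right eigenvectors $\ell,r$ normalised by $\ell^{T}r=1$. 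Conservation of area, $|q|=\lambda^{2}\sum_{p}M_{pq}|p|$, says exactly that the area vector $a=(|p|)_{p\in\p}$ is a left eigenvector of $M$ for the eigenvalue $\Lambda=\lambda^{-2}$; by uniqueness of the Perron data $a$ is a positive multiple of $\ell$, and $\sum_{p}|p|=1$ then gives, for a level-$n$ tile $s$ that is a copy of $p$, $|s|=\Lambda^{-n}a_{p}=\Lambda^{-n}\ell_{p}/(\ell^{T}\mathbf 1)$. Writing $\mu$ for Lebesgue measure on $X$, a probability measure by hypothesis, the task is to prove $\xi_{k},\rho_{k},\sigma_{k}\to\mu$ weak-$*$.

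The combinatorial core is the elementary identity that for every vector $w\ge 0$, $w\neq 0$, indexed by $\p$, every $p\in\p$ and every $n$,
\[
\lim_{k\to\infty}\frac{w^{T}M^{k-n}e_{p}}{w^{T}M^{k}\mathbf 1}=\Lambda^{-n}\,\frac{\ell_{p}}{\ell^{T}\mathbf 1},
\]
which drops out of $M^{k}\sim\Lambda^{k}r\ell^{T}$ together with $\ell^{T}M^{n}=\Lambda^{n}\ell^{T}$, the weight $w$ cancelling in the limit. Taking $w=\mathbf 1$ gives $\xi_{k}(s)\to\mu(s)$ for every level-$n$ tile $s$, since the number of level-$k$ tiles inside a level-$n$ copy of $p$ equals $\mathbf 1^{T}M^{k-n}e_{p}$ and depends only on $p$ and $k-n$. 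For $\rho_{k}$ I would group its atoms tile by tile rather than vertex by vertex: tested against any $f\in C(X)$, $\rho_{k}$ differs by at most $\varpi_{f}(C\lambda^{k})$ — with $\varpi_{f}$ the modulus of continuity of $f$ — from the measure placing mass proportional to the number of vertices of $t$ at the chosen interior point of each level-$k$ tile $t$, and this latter measure is a $w$-weighted tile count with $w_{p}=\#\{\text{vertices of }p\}$, so the displayed limit applies. Finally, the passage from ``$\nu_{k}(s)\to\mu(s)$ for every $n$ and every level-$n$ tile $s$'' to $\nu_{k}\to\mu$ weak-$*$ is routine: given $f\in C(X)$ and $\eps>0$, fix $n$ so that $f$ oscillates by less than $\eps$ on every level-$n$ tile, approximate $\int f\,d\nu_{k}$ and $\int f\,d\mu$ by the finite sums $\sum_{s}f(y_{s})\nu_{k}(s)$ and $\sum_{s}f(y_{s})\mu(s)$ over level-$n$ tiles within total error $2\eps$, and let $k\to\infty$ in the finite sum.

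The genuine obstacle is $\sigma_{k}$, which is not an additive functional of the level-$k$ tiles because a vertex is shared by several of them. For $p\in\p$ let $V^{\circ}_{p}(m)$ be the number of distinct vertices of level-$m$ subtiles of $p$ lying in the interior of $p$. Splitting the level-$(m+1)$ interior vertices of $p$ into those interior to a level-$1$ subtile and those on the internal $1$-skeleton of the first subdivision yields
\[
V^{\circ}_{p}(m+1)=\sum_{q\in\p}M_{qp}\,V^{\circ}_{q}(m)+B_{p}(m),
\]
$B_{p}(m)$ counting the level-$(m+1)$ vertices on that skeleton. The step I expect to require real work is the estimate $B_{p}(m)=o(\Lambda^{m})$ (in fact $O(\lambda^{-m})$), which rests on finite local complexity — a uniform lower bound of order $\lambda^{m}$ on the separation of distinct level-$m$ vertices — and the resulting fact that an edge of a tile carries only $O(\lambda^{-m})$ level-$m$ vertices. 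Granting it, the perturbed recursion with Perron--Frobenius for $M^{T}$ (whose Perron eigenvector is again a multiple of $a$) gives $V^{\circ}_{p}(m)=\Lambda^{m}(\beta a_{p}+o(1))$ for a single constant $\beta>0$ (positivity uses that the subdivision eventually creates interior vertices). Since the polygons of $\p$ are disjoint, $V_{k}=\sum_{p}\bigl(V^{\circ}_{p}(k)+O(\lambda^{-k})\bigr)=\beta\Lambda^{k}+o(\Lambda^{k})$, while the number of level-$k$ vertices inside a level-$n$ copy of $p$ is $V^{\circ}_{p}(k-n)+O(\lambda^{-(k-n)})$; hence $\sigma_{k}(s)\to\Lambda^{-n}a_{p}=\mu(s)$, the constant $\beta$ cancelling and the over-counting of shared boundary vertices being an $o(1)$ perturbation that does not affect the finite-sum upgrade. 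Applying that upgrade finishes the proof.
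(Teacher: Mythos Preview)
For $\{\xi_k\}$ and $\{\rho_k\}$ your argument is the paper's in substance: both reduce to the Perron--Frobenius limit you display, applied with $w=\uno$ and $w=\bv$ respectively, and both upgrade ``$\nu_k(s)\to m(s)$ for every tile $s$'' to weak-$*$ convergence by uniform continuity (the paper routes this through cluster points and the portmanteau lemma, you do it directly with the modulus of continuity). One notational slip to fix: in the paper's convention $\lambda>1$ is the \emph{expansion} factor of $\omega$, so level-$k$ tiles are $\lambda^{-k}$-scaled, conservation of area reads $\lambda^{2}|q|=\sum_{p}M_{pq}|p|$, and the Perron eigenvalue is $\Lambda=\lambda^{2}$, not $\lambda^{-2}$; the diameter bound in your $\rho_k$ step should accordingly be $C\lambda^{-k}$.

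For $\{\sigma_k\}$ your route is genuinely different. The paper pulls back through the coding map $\psi\colon X_B\to X$ from the path space of the stationary Bratteli diagram with incidence matrix $A$: any weak-$*$ cluster point $\sigma$ of $\{\sigma_k\}$ assigns equal mass to tiles of the same type and level (immediate from the definition of $\sigma_k$), so its pullback to $X_B$ is tail-invariant; primitivity of $A$ forces this to be the unique invariant measure $\mu_A$, whose pushforward is $m$. You instead extract the leading asymptotic of the interior-vertex count $V^{\circ}_p(m)$ from the perturbed recursion $V^{\circ}(m+1)=M^{T}V^{\circ}(m)+B(m)$, whose Perron direction for $M^{T}$ is the area vector once $B(m)=o(\lambda^{2m})$. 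Your argument is more elementary---it stays within finite-dimensional linear algebra and never invokes Bratteli diagrams---while the paper's is shorter once that machinery is in hand and makes the structural reason (unique ergodicity of the tail relation) explicit. One point to tighten: you justify $B_p(m)=O(\lambda^{m})$ via finite local complexity, which the paper does \emph{not} assume. The bound holds anyway: any level-$m$ vertex lying on a fixed edge $L$ is an endpoint of one of the level-$m$ subedges that $L$ inherits from the adjacent tiles, and each such subedge has length at least $\lambda^{-m}\mathfrak d$ with $\mathfrak d$ the minimum prototile edge length, so there are $O(\lambda^{m})$ of them---this is exactly the estimate the paper uses when bounding $\rho_k$ on edges.
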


This is consistent with \cite{Ol12}, as the only case there with a constant scaling factor is in the case of the original pinwheel tiling, and there the measures converge to Lebesgue measure.

This note is organized as follows. In Section 2 we present the background material needed to define our measures, as well as recall the Perron-Frobenius theorem. Since primitive substitutions in $\re^2$ arise most often in the study of aperiodic tilings, we use terminology common to that setting (for instance, we will call the polygons created at each stage {\em tiles}). In Section 3 we define our measures, and prove that $\{\xi_k\}$ and $\{\rho_k\}$ converge to Lebesgue measure using only the Perron-Frobenius theorem and general measure theory results. In Section 4 we use the fact that $X$ is a continuous surjective image of the path space of a simple Bratteli diagram arising from the substitution to show that $\{\sigma_k\}$ also converges to Lebesgue measure. 

\begin{rmk}
We are grateful to Ian Putnam for pointing out to us that the techniques used in \cite{KiPu12} can be adapted to prove our result. As is shown in \cite{AP98}, a primitive substitution gives rise to a mixing topological dynamical system $(\Omega, \phi)$, which is an example of a Smale space.  In this context, $\Omega$ has a measure of maximum entropy $\mu_{\Omega}$ for $\phi$, called the Bowen measure, which is a product of measures on the stable and unstable sets. In this Smale space the unstable sets are copies of $\re^2$, and in decomposing Bowen measure the factor on the unstable sets is Lesbesgue measure. In \cite{KiPu12} they prove that $\mu_\Omega$ is a weak-$*$ limit of a sequence of measures $\{\mu_{B,C}^k\}_{k\in\NN}$ constructed from a set $B$ in an unstable set of  $(\Omega, \phi)$ and a set $C$ in a stable set. Each $\mu_{B,C}^k$ is a sum of point masses depending on choice of $B$ and $C$.  If we choose $B$ to be any bounded open set in our $X$ and $C$ to be the set where point masses are present in our measures on $X$, then the sequences $\{\mu_{B,C}^k\}_{k\in\NN}$ eventually count up the same point masses as our sequences $\{\xi_k\}$, $\{\rho_k\}$ and $\{\sigma_k\}$ (different choices of $C$ result in the different sequences of measures). Our result that the sequences $\{\xi_k\}$, $\{\rho_k\}$ and $\{\sigma_k\}$ all converge to Lebesgue measure can be deduced from \cite{KiPu12}, Theorem 2.5.

We are also grateful to the referee for indicating that our results, as well as an analogous result for higher dimensions, also follow from results of Kellendonk and Savinien concerning spectral triples arising from stationary Bratteli diagrams, see \cite{KS12}, Sections 4.4 and 4.5. Spectral triples are an important and subtle concept in noncommutative geometry, see Connes book \cite{CoNCG} for a reference on spectral triples in particular and on noncommutative geometry in general.

Regardless of the above, our result is of independent interest, as it can be stated in basic measure-theoretic terms and can be proven with elementary techniques (notwithstanding the use of Bratteli diagrams in Section 4), avoiding the use of the heavy machinery of Smale spaces and noncommutative geometry.
\end{rmk}

\section{Substitutions in $\re^2$}

A {\em tile} is a polygon in $\re^2$, possibly carrying a label. A {\em patch} is a finite set of tiles whose interiors are pairwise disjoint. If $P$ is a patch, the {\em support} of $P$, denoted supp$(P)$, is the union of its tiles.  If $U\subset \re^2$, and $P$ is a patch, then we let $P(U)$ denote the set of tiles in $P$ which intersect $U$. We are concerned with substitutions on tiles. Let $\p = \{p_1, p_2, \dots, p_N\}$ be a finite set of polygons in $\re^2$, and without loss of generality assume that they are disjoint (so that, strictly speaking, $\p$ is itself a patch). In keeping with the literature on substitution tilings, we will call $\p$ the set of {\em prototiles}. For each $p\in\p$, pick a distinguished point $\x(p)$ in its interior and call this the {\em puncture} of $p$. If $t$ is a tile and $t = h(p)$ for some $p\in \p$ and isometry $h$ of the plane, we will say that $p$ is a {\em copy} of $p$, and that $\x(t) :=h(\x(p))$ is the puncture of $t$. We will assume that, perhaps after labeling, none of the prototiles are copies of each other. Denote by $\p^*$ the set of patches whose elements are copies of elements of $\p$. A {\em substitution} on $\p$ is a map $\omega: \p \to \p^*$ such that there exists $\lambda> 1$ such that supp$(\omega(p)) = \lambda p$ for all $p\in\p$. Clearly, $\omega$ can be extended to a map $\omega: \p^*\to\p^*$ in the obvious way, and so $\omega$ can be iterated. We say that $\omega$ is {\em primitive} if there exists $M\in\NN$ such that $\omega^M(p)$ contains a copy of $q$ for any $p,q\in\p$.

Fix once and for all a set $\p$ of prototiles and a primitive substitution $\omega$. Let $A$ be the $N\times N$ matrix whose $(i, j)$ entry is the number of copies of prototile $p_i$ in $\omega(p_j)$. Primitivity of $\omega$ implies primitivity of $A$, that is, there exists $M\in\NN$ such that the entries of $A^M$ are all strictly positive. The following are well-known facts from the Perron-Frobenius theorem on primitive matrices (see \cite{BS02}, Theorem 3.3.1 and Proposition 3.6.3 for example):

\begin{enumerate}\itemsep1pt
\item The matrix $A$ has a unique largest eigenvalue $\gamma>0$ which has multiplicity 1.
\item There is a unique left eigenvector $v_L$ of $A$ for $\gamma$ with positive entries such that $\sum v_L(i) = 1$.  We assume that all vectors are columns and so write $v_L^TA = \gamma v_L^T$.
\item There is a right eigenvector $v_R$ of $A$ for $\gamma$ with strictly positive entries and $v_L^Tv_R = 1$.
\item The matrix $v_Rv_L^T$ is the projection onto span$\{v_R\}$, and we have that
\[
\lim_{k\to\infty}\gamma^{-k}A^k = v_Rv_L^T.
\]
In particular, if $x\in\re^N$ then ${\displaystyle\lim_{k\to\infty}\gamma^{-k}A^kx = v_Rv_L^Tx.}$
\end{enumerate}
When the matrix $A$ arises from a substitution in $\re^2$, we also have the following:
\begin{enumerate}
\item[5] Assuming that the elements of $\p$ have been uniformly scaled so that the sum of their Lebesgue measures is 1, the $i$th entry of $v_L$, $v_L(i)$, is $m(p_i)$, the Lebesgue measure of $p_i$; in addition, $\gamma = \lambda^2$ (see \cite{So98}, Corollary 2.4).
\end{enumerate}
\begin{ex}\label{penrose}
Below is the Penrose substitution.\begin{center}
\begin{tabular}{ccccc}
\begin{tikzpicture}
\draw (0.309,0.25) node {$a$};
\draw (0,0) -- (0.618,0) -- (0.309,0.9510)-- (0,0);
\end{tikzpicture}
&
\begin{tikzpicture}
\draw (0,0) -- (0.618,0) -- (0.309,0.9510)-- (0,0);
\draw (0.309,0.25) node {$b$};
\end{tikzpicture}
& \Large $\stackrel{\omega}{\longrightarrow}$  &
\begin{tikzpicture}
\draw (0,0) -- (1,0) -- (0.5,1.5388)-- (0,0);
\draw (1,0) -- (0.191, 0.588);
\draw (0.3, .2) node {$a$};
\draw (0.5, 0.7) node {$c$};
\end{tikzpicture}
&
\begin{tikzpicture}
\draw (0,0) -- (1,0) -- (0.5,1.5388)-- (0,0);
\draw (0,0) -- (0.809, 0.588);
\draw (0.7, .2) node {$b$};
\draw (0.5, 0.7) node {$d$};
\end{tikzpicture}

\\
\begin{tikzpicture}
\draw (0,0) -- (1.618,0) -- (0.809,0.5877)-- (0,0);
\draw (0.809,0.25) node {$c$};
\end{tikzpicture}
&
\begin{tikzpicture}
\draw (0,0) -- (1.618,0) -- (0.809,0.5877)-- (0,0);
\draw (0.809,0.25) node {$d$};
\end{tikzpicture}
& &
\begin{tikzpicture}
\draw (0,0) -- (2.618,0) -- (1.309,0.9501)-- (0,0);
\draw (1.618,0) -- (0.809,0.5877);
\draw (1.618,0) -- (1.309,0.9501);
\draw (0.809,0.25) node {$d$};
\draw (1.8,0.25) node {$c$};
\draw (1.2,0.6) node {$b$};
\end{tikzpicture}
&
\vspace{0.5cm}
\begin{tikzpicture}
\draw (0,0) -- (2.618,0) -- (1.309,0.9501)-- (0,0);
\draw (1,0) -- (1.809,0.5877);
\draw (1,0) -- (1.309,0.9501);
\draw (1.809,0.25) node {$c$};
\draw (0.818,0.25) node {$d$};
\draw (1.418,0.6) node {$a$};
\end{tikzpicture}
\end{tabular}
\end{center}
Here, the scaling constant is the golden ratio $\gr =\, ^{(1 + \sqrt5)}\hspace{-0.05cm}/_{2} $, and the ratios of the three side lengths on the tiles on the left are 1, $\gr$, and $\gr^2$. We note that the copies on the right hand side are images of the original prototiles under orientation-preserving isometries; it is possible to present this substitution with just two prototiles but since the tiles have reflectional symmetries one would need to explicitly state the isometry for each tile on the right. In our case the substitution matrix is
\[
A =\left[\begin{array}{cccc}
1&0&0&1\\
0&1&1&0\\
1&0&1&1\\
0&1&1&1
\end{array}\right].
\] 
The entries of $A^2$ are all strictly positive, so $\omega$ is primitive. One calculates that the largest eigenvalue of $A$ is $\gr^2$, with left and right eigenvectors $v_L =\, ^1\hspace{-0.05cm}/_{2\gr^2}[\begin{array}{cccc}1&1&\gr&\gr\end{array}]$ and $v_R =\, ^{\gr^2}\hspace{-0.05cm}/_{(\gr^2+1)}[\begin{array}{cccc}1&1&\gr&\gr\end{array}]^T$ after normalizing. One sees that the entries of $v_L$ do indeed add up to $1$ and the ratios of its entries are exactly the relative areas of the prototiles.
\end{ex}

For more examples of substitutions in $\re^2$ we refer to \cite{AP98}, \cite{FreWeb} and \cite{GS13}. For more on the general theory of substitutions, we refer to Queff{\'e}lec, \cite{Q87}.
\section{Point Measures}

In \cite{Ol12}, Olli considers sequences of measures on a single right triangle which is then divided according to a generalized pinwheel scheme. She then shows that each of these sequences converges to the same measure. Because we have more than one tile type, we will instead define analogous measures on the union of the prototiles. Since we assumed that the prototiles are disjoint, the union of the prototiles embeds into $\re^2$ nicely.

Let
\[
X = \bigcup_{p\in\p}p
\]
be the disjoint union of the prototiles. All measures we consider are defined on the $\sigma$-algebra of the Borel sets in $X$ and so we make no further explicit reference to it. As in the previous section we assume that elements of $\p$ are uniformly scaled so that the Lebesgue measure $m$ of $X$ is equal to 1. Hence, $m(p_i) = v_L(i)$.

Let
\[
\T_k = \lambda^{-k}\omega^k(\p) = \{ \lambda^{-k}\omega^{k}(p)\mid p\in\p\}.
\]
Also let 
\[
\T = \bigcup_{k\geq 0}\T_k
\]
Then each $\T_k$ is a patch (which is the union of the patches $\lambda^{-k}\omega^k(p_i)$) consisting of copies of elements of $\lambda^{-k}\p$. Each of the patches $\T_k$ has support $X$. From the definition of the matrix $A$, we have that the $j$th entry of the vector $A^ke_i$ is the number of copies of $\lambda^{-k}p_j$ in the patch $\lambda^{-k}\omega^k(p_i)$. In particular, if we define \[\uno = [1 \ 1 \ \cdots \ 1]^T\] to be the column vector in $\re^N$ with 1 in every entry, and let $e_i$ be the $i$th standard basis vector in $\re^N$, then the total number of tiles in $\lambda^{-k}\omega^k(p_i)$ is $\uno^T A^ke_i$, and the total number of tiles in $\T_k$ is $\uno^T A^k\uno$.

An element $t\in\T_k$ is a scaled copy of an element of $\p$, and so we can speak of its puncture without confusion -- denote the puncture of $t$ by $\x(t)$ as above.

\begin{defn}\label{defxi}
For each $k\in\NN$ define a measure $\xi_k$ on $X$ by
\[
\xi_k = \frac{1}{|\T_k|}\sum_{t\in\T_k} \delta_{\x(t)}
\]
where $\delta_x$ indicates the point mass at the point $x\in X$.
\end{defn}
For a tile $t$, we let $\bv(t)$ denote the set of vertices of $t$. We also write $\bv(P)$ for the set of vertices in the patch $P$.

\begin{defn}\label{defrho}
For each $k\in\NN$ define a measure $\rho_k$ on $X$ by
\[
\rho_k = \frac{1}{{\displaystyle\sum_{t\in \T_k}}|\bv(t)|}\sum_{\substack{t\in \T_k\\ x\in\bv(t)}} \delta_x
\]
\end{defn}

For a Borel set $E$ and for each tile in $\T_k$ which intersects $E$, $\rho_k$ counts one for each vertex in the tile contained in $E$, and then normalizes. Hopefully without confusion, we let $\bv$ indicate the column vector in $\re^{N}$ whose $i$th entry is the number of vertices in the tile $p_i$. Then we see that, since the $j$th entry of the vector $A^k\uno$ is the number of copies of $\lambda^{-k}p_j$ in $\T_k$, we must have that $$\sum_{t\in \T_k}|\bv(t)| = \bv^T A^k\uno.$$

\begin{defn}\label{defsigma}
For each $k\in\NN$ define a measure $\sigma_k$ on $X$ by
\[
\sigma_k = \frac{1}{|\bv(\T_k)|}\sum_{ x\in\bv(\T_k)} \delta_x
\]
\end{defn}

The space $X$ can be seen as the disjoint union of all the prototiles. Each measure substitutes each prototile $k$ times (without scaling) and assigns point masses to points inside each prototile, and then normalizes. The measure $\xi_k$ (the distribution of prototiles after $k$ division steps) puts a point mass at the puncture of each tile. The measure $\sigma_k$ (the distribution of vertices after $k$ division steps) puts a point mass at each vertex in the patch, while $\rho_k$ (the distribution of vertices, counted with multiplicity, after $k$ division steps) puts a point mass at each vertex weighted by how many tiles intersect it.

\begin{ex}\label{penrosemeasures}
In the case of the Penrose tiling from Example \ref{penrose}, the space $X$ is the set of triangles on the left hand side. We will show what the measures $\xi_2$, $\rho_2$ and $\sigma_2$ look like on supp$(\{b\})\subset X$.

\begin{center}
\begin{tikzpicture}
\draw (0,0) -- (1.618,0) -- (0.809,2.490)-- (0,0);
\draw (0,0) -- (1.309, 0.951);
\draw (1.618,0) -- (0.809, 0.5877) -- (0.5 , 1.539) -- (1.309, 0.951);
\filldraw (0.809,0.25) circle (1.5pt);
\filldraw (0.809,1.6) circle (1.5pt);
\filldraw (1.2,0.55) circle (1.5pt);
\filldraw (.95,.91) circle (1.5pt);
\filldraw (.5,.67) circle (1.5pt);
\end{tikzpicture}
\end{center}

To calculate $\xi_2$ we first find $\uno^T A^2\uno = 26$. Then there is a point mass with weight $^1\hspace{-0.05cm}/_{26}$ at each puncture in the image above.

The measures $\rho_2$ and $\sigma_2$ put weights at the vertices of the tiles. Since each tile has three vertices, we have that $\bv^TA^2\uno = 3\cdot26 = 78$. Furthermore, after substituting twice the tiles $a$ and $b$ each have 6 vertices and tiles $c$ and $d$ have 9 vertices, and so $|\bv(\T_2)| = 30$. Thus $\rho_2$ and $\sigma_2$ put weights on the vertices according to the following image:

\begin{center}
\begin{tikzpicture}
\draw (0,0) -- (1.618,0) -- (0.809,2.490)-- (0,0);
\draw (0,0) -- (1.309, 0.951);
\draw (1.618,0) -- (0.809, 0.5877) -- (0.5 , 1.539) -- (1.309, 0.951);
\filldraw (0,0) circle (1.5pt);
\draw (0,0) node[anchor=north] {$^2\hspace{-0.05cm}/_{78}$};
\filldraw (1.618,0) circle (1.5pt);
\draw (1.618,0) node[anchor=north] {$^2\hspace{-0.05cm}/_{78}$};
\filldraw (0.809,2.490) circle (1.5pt);
\draw (0.809,2.490) node[anchor=south] {$^1\hspace{-0.05cm}/_{78}$};
\filldraw (1.309, 0.951) circle (1.5pt);
\draw (1.309, 0.951) node[anchor=west] {$^3\hspace{-0.05cm}/_{78}$};
\filldraw (0.5 , 1.539) circle (1.5pt);
\draw (0.5 , 1.539) node[anchor=east] {$^3\hspace{-0.05cm}/_{78}$};
\filldraw (0.809, 0.5877) circle (1.5pt);
\draw (0.809, 0.5) node[anchor=north] {{\scriptsize$ ^4\hspace{-0.05cm}/_{78}$}};
\draw (0.819, -1) node {$\rho_2$};
\end{tikzpicture}
\hspace{3cm}
\begin{tikzpicture}
\draw (0,0) -- (1.618,0) -- (0.809,2.490)-- (0,0);
\draw (0,0) -- (1.309, 0.951);
\draw (1.618,0) -- (0.809, 0.5877) -- (0.5 , 1.539) -- (1.309, 0.951);
\filldraw (0,0) circle (1.5pt);
\draw (0,0) node[anchor=north] {$^1\hspace{-0.05cm}/_{30}$};
\filldraw (1.618,0) circle (1.5pt);
\draw (1.618,0) node[anchor=north] {$^1\hspace{-0.05cm}/_{30}$};
\filldraw (0.809,2.490) circle (1.5pt);
\draw (0.809,2.490) node[anchor=south] {$^1\hspace{-0.05cm}/_{30}$};
\filldraw (1.309, 0.951) circle (1.5pt);
\draw (1.309, 0.951) node[anchor=west] {$^1\hspace{-0.05cm}/_{30}$};
\filldraw (0.5 , 1.539) circle (1.5pt);
\draw (0.5 , 1.539) node[anchor=east] {$^1\hspace{-0.05cm}/_{30}$};
\filldraw (0.809, 0.5877) circle (1.5pt);
\draw (0.809, 0.5) node[anchor=north] {{\scriptsize$ ^1\hspace{-0.05cm}/_{30}$}};
\draw (0.819, -1) node {$\sigma_2$};
\end{tikzpicture}
\end{center}
\end{ex}
For the rest of this section, we prove Theorem \ref{maintheorem} for $\{\xi_k\}$ and $\{\rho_k\}$. Our proof relies only on the Perron-Frobenius theorem and general measure theory results. We prove Theorem \ref{maintheorem} for $\{\sigma_k\}$ using the theory of Bratteli diagrams, see Section 4.

\begin{lem}(\cite{PU10}, Theorem 3.1.4)\label{nucontinuity}
If a sequence of Borel measures $\{\nu_k\}$ converges to $\nu$ weak-$*$ and if $E$ is a Borel set such that $\nu(\partial E) = 0$, then we have that $\nu_k(E) \to \nu(E)$. 
\end{lem}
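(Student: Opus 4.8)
The plan is to obtain this from the two one-sided halves of the portmanteau theorem. Every $\nu_k$ and $\nu$ here is a Borel probability measure on $X$, and $X \subseteq \re^2$ is a metric space, so it suffices to prove (a) $\limsup_k \nu_k(F) \le \nu(F)$ for every closed $F \subseteq X$ and (b) $\liminf_k \nu_k(G) \ge \nu(G)$ for every open $G \subseteq X$, and then to sandwich $\nu_k(E)$ between these using the hypothesis $\nu(\partial E) = 0$.

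To prove (a), given a closed set $F$ I would, for each $\delta > 0$, set
$$f_\delta(x) = \max\bigl\{0,\ 1 - \delta^{-1}\operatorname{dist}(x,F)\bigr\}.$$
Each $f_\delta$ is bounded and continuous on $X$, satisfies $\uno_F \le f_\delta \le 1$, and decreases pointwise to $\uno_F$ as $\delta \downarrow 0$ (here one uses that $F$ is closed, so $\operatorname{dist}(x,F) > 0$ whenever $x \notin F$). Thus $\nu_k(F) \le \int_X f_\delta\, d\nu_k$, and weak-$*$ convergence gives $\int_X f_\delta\, d\nu_k \to \int_X f_\delta\, d\nu$, so $\limsup_k \nu_k(F) \le \int_X f_\delta\, d\nu$ for every $\delta$; letting $\delta \downarrow 0$ and applying dominated convergence (the $f_\delta$ are uniformly bounded by $1$ and $\nu$ is finite) yields $\limsup_k \nu_k(F) \le \nu(F)$. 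Statement (b) then follows by complementation: if $G$ is open then $X \setminus G$ is closed and all total masses equal $1$, so $\liminf_k \nu_k(G) = 1 - \limsup_k \nu_k(X \setminus G) \ge 1 - \nu(X \setminus G) = \nu(G)$.

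To finish, write $E^{\circ}$ and $\overline{E}$ for the interior and closure of $E$. Since $\overline{E} = E^{\circ}\sqcup\partial E$ and $\nu(\partial E) = 0$, and since $E^{\circ}\subseteq E\subseteq\overline{E}$, we get $\nu(E^{\circ}) = \nu(E) = \nu(\overline{E})$. Applying (b) to $E^{\circ}$ and (a) to $\overline{E}$,
$$\nu(E) = \nu(E^{\circ}) \le \liminf_k \nu_k(E^{\circ}) \le \liminf_k \nu_k(E) \le \limsup_k \nu_k(E) \le \limsup_k \nu_k(\overline{E}) \le \nu(\overline{E}) = \nu(E),$$
so every inequality is an equality and $\nu_k(E)\to\nu(E)$.

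The only step that is not routine bookkeeping is (a): the construction of the cut-off functions $f_\delta$ and the justification that $\int_X f_\delta\, d\nu \to \nu(F)$. This is exactly where the metrizability of $X$ and the finiteness of the measures enter, and in the present setting ($X\subseteq\re^2$, probability measures) both are immediate; since this lemma is the standard portmanteau theorem, I expect no real obstacle beyond organizing the $\limsup$/$\liminf$ estimates and the complementation argument correctly.
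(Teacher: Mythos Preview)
Your argument is correct: this is the standard portmanteau/continuity-set argument, and in the paper's setting $X\subset\re^2$ is compact and all the measures in play are probability measures, so the finiteness and equal-total-mass assumptions you invoke are justified. The paper itself does not prove this lemma at all; it simply quotes it as Theorem~3.1.4 of \cite{PU10}, so there is no in-paper proof to compare against, and your self-contained derivation is a perfectly acceptable substitute.
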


\begin{lem}\label{edgemeasurezero}
Let $\{\nu_k\}$ be a sequence of measures on $X$ such that for any $t\in\T$ the sequence of numbers $\{\nu_k(t)\}$ converges to $m(t)$. Let $L$ be an edge of some tile in $\T_k$. Then $L$ has measure zero with respect to any weak-$*$ limit point of $\{\nu_k\}$. 
\end{lem}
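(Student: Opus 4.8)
The plan is to trap $L$ inside a shrinking family of sub-patches and exploit the uniform scaling factor together with the hypothesis $\nu_k(t)\to m(t)$. First I would fix a weak-$*$ limit point $\nu$ of $\{\nu_k\}$ and pass to a subsequence with $\nu_{k_n}\to\nu$ weak-$*$. For each $j\in\NN$, let $P_j=\T_j(L)$ be the finite set of tiles of $\T_j$ that meet $L$, and set $S_j=\supp(P_j)\subseteq X$. Since every tile of $\T_j$ is a copy of some $\lambda^{-j}p_i$, it has diameter at most $\lambda^{-j}D$, where $D=\max_i\mathrm{diam}(p_i)$; hence a tile meeting $L$ is contained in the $\lambda^{-j}D$-neighbourhood of $L$, so $S_j$ lies in that neighbourhood. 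As $L$ is a segment, $m(L)=0$, and these neighbourhoods shrink to $L$, so $m(S_j)\to 0$ as $j\to\infty$.

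Next I would check that $L$ lies in the interior of $S_j$ relative to $X$. Fix $x\in L$. Because $\T_j$ is a finite patch with $\supp(\T_j)=X$, the finitely many tiles of $\T_j$ not containing $x$ are closed sets avoiding $x$, so a small enough relative ball about $x$ in $X$ meets only tiles that contain $x$; every such tile contains $x\in L$ and hence belongs to $P_j$, and together these tiles cover a neighbourhood of $x$ in $X$. Thus $G_j:=\mathrm{int}_X(S_j)$ is open in $X$ and satisfies $L\subseteq G_j\subseteq S_j$; note that this argument is insensitive to whether $x$ happens to lie on the boundary of $X$.

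Now I would combine these observations. Using $L\subseteq G_j$, the portmanteau characterization of weak-$*$ convergence ($\liminf_n\nu_{k_n}(G)\ge\nu(G)$ for open $G$) applied to $G_j$, the inclusion $G_j\subseteq S_j$, subadditivity of $\nu_{k_n}$ over the finite cover $P_j$ of $S_j$, and finally the hypothesis applied to each $t\in\T_j\subseteq\T$, one gets
\[
\nu(L)\le\nu(G_j)\le\liminf_n\nu_{k_n}(G_j)\le\liminf_n\nu_{k_n}(S_j)\le\liminf_n\sum_{t\in P_j}\nu_{k_n}(t)=\sum_{t\in P_j}m(t)=m(S_j),
\]
where the last equality holds because distinct tiles of $P_j$ overlap only in segments, which are Lebesgue-null. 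Since this holds for every $j$ and $m(S_j)\to 0$, we conclude $\nu(L)=0$.

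The only step needing genuine care is the relative-interior claim, but the finiteness of the patch $\T_j$ makes it routine. The one conceptually important point is to invoke the portmanteau theorem in the form bounding $\nu$ of an \emph{open} set from above by a $\liminf$: this is precisely what lets us avoid assuming in advance that tile boundaries are $\nu$-null, which is essentially the conclusion we are after. Note also that Lemma~\ref{nucontinuity} is not directly applicable here for the same reason — it would require $\nu(\partial S_j)=0$, which is not yet known.
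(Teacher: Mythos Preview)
Your proof is correct and follows essentially the same strategy as the paper's: trap $L$ inside $S_j=\supp(\T_j(L))$, whose Lebesgue measure tends to $0$ as $j\to\infty$, and then use weak-$*$ convergence together with the hypothesis $\nu_k(t)\to m(t)$ on the finitely many tiles of $\T_j(L)$ to bound $\nu(L)$ by $m(S_j)$. The only cosmetic difference is that the paper passes through a Urysohn test function supported in $S_j$, while you invoke the portmanteau inequality $\nu(G_j)\le\liminf_n\nu_{k_n}(G_j)$ for the open set $G_j=\mathrm{int}_X(S_j)$ directly; your explicit verification that $L\subseteq G_j$ is precisely what is needed (and is implicitly used in the paper's Urysohn step as well).
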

\begin{proof}
Let $\nu$ be a cluster point of $\{\nu_k\}$. Let $\eps >0$ and find $\delta >0$ such that $m(B_\delta(L)) < \eps$. Find $l>0$ such that $\delta$ is greater than the diameter of any element of $\lambda^{-l}\p$. Then supp$(\T_l(L))\subset B_\delta(L)$.

Let $f$ be a continuous function with $0\leq f(x) \leq 1$ for all $x\in X$, such that $f(x) = 1$ for all $x\in L$ and also such that $f$ is supported on supp$(\T_l(L))$ (such a function exists by Urysohn's lemma). Then we have

\begin{eqnarray*}
\nu(L)& = & \int \chi_{L} \, \text{d}\nu  \leq \int f \, \text{d}\nu = \lim_{k\to\infty} \int f \, \text{d}\nu_k\\
      &\leq& \lim_{k\to \infty} \int \chi_{\text{supp}(\T_l(L))} \, \text{d}\nu_k  = \lim_{k\to \infty} \nu_k(\text{supp}(\T_l(L)))\\
      &\leq& \lim_{k\to \infty} \left(\sum_{t\in\T_l(L)}\nu_k(t) \right)= \sum_{t\in\T_l(L)}m(t) < \eps
\end{eqnarray*}
\end{proof}

\begin{rmk}
The above lemma is the analogous of Theorem 2 in \cite{Ol12}. Though the statement in \cite{Ol12} about $\{\xi_k\}$ is correct we do not see why the proof is.
\end{rmk}

\begin{prop}\label{convergence}
Let $\{\nu_k\}$ be a sequence of measures on $X$ such that for any $t\in\T$ the sequence of numbers $\{\nu_k(t)\}$ converges to $m(t)$. Then $\{\nu_k\}$ converges weak-$*$ to $m$.
\end{prop}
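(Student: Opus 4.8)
The plan is to argue through weak-$*$ cluster points, reusing Lemmas~\ref{nucontinuity} and~\ref{edgemeasurezero}. Since $X$ is compact and each $\nu_k$ is a Borel probability measure, the set of probability measures on $X$ is weak-$*$ compact and metrizable; so $\{\nu_k\}$ has at least one weak-$*$ cluster point, every such cluster point is again a probability measure (apply the convergence to the constant function $1$), and a sequence in this space converges iff it has a unique cluster point. Hence it suffices to show that every weak-$*$ cluster point of $\{\nu_k\}$ equals $m$.

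Let $\nu$ be such a cluster point, say $\nu_{k_j}\to\nu$ weak-$*$. The hypothesis of the Proposition is exactly the hypothesis of Lemma~\ref{edgemeasurezero}, so every edge of every tile appearing in some $\T_k$ is $\nu$-null; since the boundary $\partial t$ of any $t\in\T$ is a finite union of such edges, $\nu(\partial t)=0$, and trivially $m(\partial t)=0$ as well. Applying Lemma~\ref{nucontinuity} to the subsequence $\{\nu_{k_j}\}$ and the Borel set $t$ gives $\nu_{k_j}(t)\to\nu(t)$, while the hypothesis gives $\nu_{k_j}(t)\to m(t)$; therefore $\nu(t)=m(t)$ for every $t\in\T$.

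It remains to promote agreement on all tiles to equality of measures. Put $G=\bigcup_{k\ge 0}\bigcup_{t\in\T_k}\partial t$, a countable union of edges, so that $\nu(G)=m(G)=0$. From $\T_k=\lambda^{-k}\omega^k(\p)$ one checks directly that $\T_{k+1}$ subdivides $\T_k$ and that the largest diameter of a tile in $\T_k$ is at most $\lambda^{-k}\max_{p\in\p}\operatorname{diam}(p)\to 0$. Consequently $\{\operatorname{int}(t)\setminus G : t\in\T\}\cup\{\emptyset\}$ is a $\pi$-system, it generates the Borel $\sigma$-algebra of $X\setminus G$ (because the mesh of $\T_k$ tends to $0$), and $\nu$ and $m$ agree on it, since $\operatorname{int}(t)\setminus G$ differs from $t$ by a set that is null for both measures. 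By Dynkin's $\pi$--$\lambda$ theorem $\nu=m$ on $X\setminus G$, and both measures vanish on $G$, so $\nu=m$, completing the proof.

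The one point that needs care is bookkeeping: tiles are closed polygons, so each $\T_k$ is not literally a partition of $X$ and one must systematically discard the shared boundaries; this causes no difficulty precisely because those boundaries are (finite, or over all $k$ countable) unions of edges, and Lemma~\ref{edgemeasurezero} guarantees that every cluster point---like $m$ itself---ignores them, while the refinement and vanishing-mesh statements are immediate from the definition of $\T_k$. (One could instead bypass the two lemmas entirely and prove $\int f\,\mathrm d\nu_k\to\int f\,\mathrm dm$ for $f\in C(X)$ directly: choose $k$ so that $f$ varies by less than $\eps$ on each tile of $\T_k$, replace $f$ by the step function taking value $f(\x(t))$ on a Borel partition refining $\T_k$, and control the error using $\sum_{t\in\T_k}\nu_n(t)\to\sum_{t\in\T_k}m(t)=1$; but the route above has the advantage of reusing work already done.)
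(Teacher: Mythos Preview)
Your proof is correct and follows essentially the same route as the paper: pass to a weak-$*$ cluster point, invoke Lemma~\ref{edgemeasurezero} to kill tile boundaries, use Lemma~\ref{nucontinuity} to match the cluster point with $m$ on all of $\T$, and conclude by compactness that the whole sequence converges. The only difference is that where the paper simply asserts ``$\T$ generates the Borel $\sigma$-algebra, so $\nu'=m$'', you spell out the $\pi$--$\lambda$ bookkeeping (discarding the null grid $G$ so that tile interiors form an honest $\pi$-system), which is a welcome clarification rather than a departure.
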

\begin{proof}
First we note that $\T$ is a countable set, and for any open set $U\subset X$ and $x\in U$ we can find $t\in\T$ such that $x\in t \subset U$. So $\T$ generates the Borel $\sigma$-algebra on $X$.

 Let $\nu '$ be a cluster point of $\{\nu_k\}$. By Lemma \ref{edgemeasurezero} $\nu '(\partial t) = 0$ for all $t\in \T$ and hence, by Lemma \ref{nucontinuity}, $\nu '(t) = m(t)$ for all $t\in \T$ and so $\nu ' = m$. 

Now, since the closed unit ball is compact in the weak-$*$ topology and $\{\nu_k\}$ has only one cluster point (Lebesgue measure) we conclude that $\{\nu_k\}$ converges to $m$ weak-$*$.
\end{proof}

As we see below, Proposition \ref{convergence} is a powerful tool to establish the convergence of $\{\xi_k\}$ and $\{\rho_k\}$.

\begin{lem}
If $t\in\T$ then the sequence $\{\xi_k(t)\}$ converges to $m(t)$.
\end{lem}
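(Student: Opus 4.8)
The plan is to fix $t \in \T$, say $t \in \T_{k_0}$, and count how many tiles of $\T_k$ (for $k \geq k_0$) land inside $t$, comparing that count to the total number of tiles in $\T_k$. Since $t$ is a scaled copy $\lambda^{-k_0}\omega^{k_0}(p_i)$ of some prototile $p_i$, applying the substitution $k - k_0$ more times and rescaling shows that $t$ is exactly tiled by $\lambda^{-k}\omega^{k-k_0}(\text{copy of }p_i)$ — that is, the tiles of $\T_k$ contained in $t$ are precisely (a rescaled copy of) the patch $\lambda^{-(k-k_0)}\omega^{k-k_0}(p_i)$. Here I would use that the punctures lie in the interiors of tiles, so every puncture of a $\T_k$-tile is either strictly inside $t$ or strictly outside it; no puncture lies on $\partial t$, and so $\xi_k(t)$ is genuinely $\tfrac{1}{|\T_k|}$ times the number of $\T_k$-tiles sitting inside $t$, with no ambiguity at the boundary.

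Next I would translate this into matrix language using the bookkeeping already set up before Definition \ref{defxi}. The number of tiles of $\T_k$ inside $t$ is $\uno^T A^{k-k_0} e_i$, while $|\T_k| = \uno^T A^k \uno$. Hence
\[
\xi_k(t) = \frac{\uno^T A^{k-k_0} e_i}{\uno^T A^k \uno}.
\]
Now I would apply the Perron–Frobenius facts from Section 2. Writing $\gamma = \lambda^2$ for the Perron eigenvalue, multiply numerator and denominator by $\gamma^{-k}$: the numerator becomes $\gamma^{-k_0}\,\uno^T (\gamma^{-(k-k_0)} A^{k-k_0}) e_i \to \gamma^{-k_0}\,\uno^T v_R v_L^T e_i = \gamma^{-k_0} (\uno^T v_R) v_L(i)$, and the denominator becomes $\uno^T (\gamma^{-k} A^k)\uno \to \uno^T v_R v_L^T \uno = (\uno^T v_R)(v_L^T \uno) = \uno^T v_R$, using $\sum_i v_L(i) = 1$. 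Therefore
\[
\xi_k(t) \longrightarrow \gamma^{-k_0} v_L(i) = \lambda^{-2k_0} m(p_i),
\]
where the last equality is fact 5 in Section 2. Since $t = \lambda^{-k_0}(\text{isometric copy of } p_i)$ scales area by $\lambda^{-2k_0}$, we have $m(t) = \lambda^{-2k_0} m(p_i)$, and so $\xi_k(t) \to m(t)$ as claimed.

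The main obstacle is the geometric bookkeeping step: one must be careful that "the $\T_k$-tiles inside $t$" is exactly the rescaled patch $\omega^{k-k_0}$ applied to the prototile underlying $t$, with the right count $\uno^T A^{k-k_0} e_i$, and that punctures never sit on shared edges so that the set counted by $\xi_k(t)$ is unambiguous (tiles contained in $t$ contribute their full puncture-mass, tiles not contained in $t$ contribute nothing, and there is no boundary case). Once that is pinned down, the rest is a direct application of item 4 of Perron–Frobenius together with the normalizations $v_L^T \uno = 1$ and $v_L(i) = m(p_i)$; no further estimates are needed. Combined with Proposition \ref{convergence}, this lemma immediately yields weak-$*$ convergence of $\{\xi_k\}$ to Lebesgue measure.
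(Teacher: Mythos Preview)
Your proof is correct and follows essentially the same route as the paper's: both identify $\xi_k(t)$ as the ratio $\dfrac{\uno^T A^{k-k_0} e_i}{\uno^T A^k \uno}$ and pass to the limit using the Perron--Frobenius fact $\gamma^{-k}A^k \to v_R v_L^T$ together with $v_L(i)=m(p_i)$. The only cosmetic difference is that the paper factors the denominator as $\uno^T A^{k_0} A^{k-k_0}\uno$ before taking the limit, whereas you multiply numerator and denominator by $\gamma^{-k}$; your added remark that punctures lie in tile interiors (so no boundary ambiguity arises) is a nice point the paper leaves implicit.
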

\begin{proof}
Let $t\in\T_j$. For $k>j$, $\xi_k(t)$ counts the number of tiles in the patch $\lambda^{-k+j}\omega^{k-j}(t)$ and divides it by the number of tiles among all $\T_k$. We show that this ratio converges to the Lebesgue measure of $t$ as $k\to\infty$.

We know that $t$ is a copy of some element of $\lambda^{-j}\p$, say $\lambda^{-j}p_s$. For $k>j$, the number of tiles in the patch $\lambda^{-k+j}\omega^{k-j}(t)\subset \T_k$ is equal to $\uno^T A^{k-j} e_s$. Hence we have
\begin{eqnarray}
\xi_k(t) &=& \frac{|\lambda^{-k+j}\omega^{k-j}(t)|}{|\T_k|} =\frac{\uno^T A^{k-j} e_s}{\uno^T A^k\uno} = \frac{\uno^T A^{k-j} e_s}{\uno^T A^j A^{k-j}\uno}\label{lebesgueeq}\\
         &\to&\frac{\uno^T v_Rv_L^Te_s}{\uno^T A^j v_Rv_L^T\uno}=\frac{\uno^T v_R \cdot v_L(s)}{\uno^T (\lambda^{2j} v_R)} = \frac{v_L(s)}{ \lambda^{2j}}\nonumber
\end{eqnarray}
Since $v_L(s)$ is the Lebesgue measure of $p_s$ for any $s$, this is equal to the Lebesgue measure of $p_s$ divided by $\lambda^{2j}$, which is the Lebesgue measure of $t$ (a copy of $\lambda^{-j}p_s$).
\end{proof}

\begin{cor}\label{Txi}
The sequence $\{\xi_k\}$ converges to $m$ weak-$*$.
\end{cor}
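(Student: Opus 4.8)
The plan is to derive Corollary \ref{Txi} directly from the two results immediately preceding it, with essentially no new work. Proposition \ref{convergence} has already packaged the hard analytic content: it says that for \emph{any} sequence of measures $\{\nu_k\}$ on $X$, the single numerical condition ``$\nu_k(t)\to m(t)$ for every $t\in\T$'' forces $\{\nu_k\}$ to converge weak-$*$ to Lebesgue measure $m$. The Lemma stated just above the corollary has, in turn, verified precisely this numerical condition for the particular sequence $\nu_k=\xi_k$, using the formula in (\ref{lebesgueeq}) together with the Perron--Frobenius limit $\gamma^{-k}A^k\to v_Rv_L^T$ and the identification $v_L(s)=m(p_s)$. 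So the proof reduces to: take $\nu_k=\xi_k$, observe that the hypothesis of Proposition \ref{convergence} holds by the preceding Lemma, and conclude.

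Concretely, I would write: by the Lemma immediately above, $\xi_k(t)\to m(t)$ for every $t\in\T$; hence $\{\xi_k\}$ satisfies the hypothesis of Proposition \ref{convergence}, and therefore $\{\xi_k\}$ converges weak-$*$ to $m$. That is the entire argument.

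There is no genuine obstacle here — all of it was absorbed into building Lemma \ref{edgemeasurezero}, Proposition \ref{convergence}, and the $\xi_k(t)$ computation. The only point worth a moment's thought is bookkeeping: one should check that the class of test sets $t\in\T$ for which the Lemma establishes convergence is exactly the class of sets appearing in the hypothesis of Proposition \ref{convergence}, which it is by construction. So a one-line proof is appropriate, and I would not expand it further.
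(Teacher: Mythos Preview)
Your proposal is correct and matches the paper's intended argument exactly: the corollary is an immediate consequence of the preceding Lemma together with Proposition \ref{convergence}, and the paper gives no further proof beyond stating it as a corollary. Your one-line justification is precisely what is called for.
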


We now turn to $\rho_k$.

\begin{lem}\label{rhoedge}
Let $L$ be an edge in some tile in $\T$. Then the sequence of numbers $\{\rho_k(L)\}$ converges to 0.
\end{lem}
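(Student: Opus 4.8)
The plan is to show that the numerator of $\rho_k(L)$ grows slower than the denominator. Recall that $\rho_k(L) = \frac{1}{\bv^T A^k \uno}\sum_{t \in \T_k,\, x\in\bv(t),\, x\in L}1$, so the denominator is $\bv^T A^k \uno \sim C \gamma^k$ for a positive constant $C$ by Perron-Frobenius fact (4) (since $\bv$ has positive entries and $\uno$ has positive entries, $\bv^T A^k \uno = \gamma^k(\bv^T v_R)(v_L^T\uno)(1+o(1))$ with $\bv^T v_R > 0$). So it suffices to show that the numerator --- the number of (tile, vertex) incidences landing on $L$ at stage $k$ --- is $o(\gamma^k)$, in fact I expect it to be $O(\gamma^{k/2})$ or so, i.e., of lower order.

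First I would fix $L$ as an edge of some tile in $\T_j$. For $k > j$, the vertices on $L$ coming from tiles in $\T_k$ are exactly the vertices lying on the boundary edge $L$ of the patch $\lambda^{-k}\omega^k$ applied to the tile $t_0 \in \T_j$ having $L$ as an edge. The key geometric observation is that $L$ is a one-dimensional segment, so the tiles of $\T_k$ touching $L$ and the vertices on $L$ are governed by the induced one-dimensional substitution structure along $L$: when a tile is substituted, an edge of it is subdivided into finitely many edges of the smaller tiles (the edge-substitution). Let me define, for each prototile $p_i$ and each edge type $\ell$ of $p_i$, the number of sub-edges and sub-vertices that appear on that edge after one substitution; since $\supp(\omega(p_i)) = \lambda p_i$, the edge $\lambda\ell$ of $\lambda p_i$ is partitioned into edges of the smaller tiles, and the number of these is some fixed constant. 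Iterating, the number of vertices on $L$ after $k-j$ substitutions is at most $c^{\,k-j}$ where $c$ is the maximum number of sub-edges produced on any prototile edge in one step --- but crucially each such vertex is shared by at most a bounded number $D$ of tiles (a uniform bound on the number of tiles meeting at a point, which follows from the finiteness of $\p$ and the substitution being given by finitely many local patterns). Hence the numerator is at most $D \cdot c^{\,k-j}$.

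The remaining point is to compare $c$ with $\gamma = \lambda^2$. The number of sub-edges on an edge of length $L$ grows like the length: an edge of length $r$ in $\lambda p_i$ decomposes into edges whose lengths sum to $r$, and these lengths are bounded below by $\lambda^{-1}$ times the shortest prototile edge; so $c \le \lambda \cdot (\text{const})$, and more precisely the number of vertices on $L$ after $m$ substitution steps is $O(\lambda^m)$. Therefore the numerator is $O(\lambda^{k-j}) = O(\gamma^{(k-j)/2})$, while the denominator is $\Theta(\gamma^k)$, so $\rho_k(L) = O(\gamma^{-k/2}) \to 0$.

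I expect the main obstacle to be making the one-dimensional edge-counting argument clean and uniform: one must argue that an edge $L$ of a stage-$j$ tile really is, under further substitution, subdivided in a way controlled by finitely many "edge substitution rules" independent of how $L$ sits inside $X$, and that the number of tiles of $\T_k$ meeting $L$ (as opposed to merely the number of vertices on $L$) is bounded by a constant times the number of vertices on $L$. Both follow from primitivity plus finiteness of $\p$ (there are only finitely many edge lengths and edge types, and the local structure around any edge is one of finitely many patches), but the bookkeeping --- especially handling the endpoints of $L$, where tiles from "outside" $t_0$ may also contribute vertices --- needs care. An alternative, possibly slicker route that avoids the explicit edge-substitution matrix: cover $L$ by a thin $\delta$-neighborhood as in Lemma \ref{edgemeasurezero}, note $\rho_k(L) \le \rho_k(\supp(\T_l(L)))$, and then bound $\rho_k(\supp(\T_l(L)))$ by $\sum_{t \in \T_l(L)}\rho_k(t)$; if one can show $\rho_k(t) \to m(t)$ for all $t \in \T$ (the analogue of the $\xi_k$ lemma, which should follow from Perron-Frobenius exactly as for $\xi_k$ since $\rho_k(t) = \frac{|\bv(t)|\,\uno^T A^{k-j}e_s}{\bv^T A^k \uno} \to \frac{|\bv(t)|\,v_L(s)}{\bv^T v_R \cdot v_L^T\uno \cdot \lambda^{2j}}\cdots$), then $\rho_k(L) \le \sum_{t\in\T_l(L)} m(t) < \eps$ for $l$ large, which is cleaner --- but one should double-check that $\rho_k(t)$, which counts vertices \emph{weighted by tile multiplicity} and can exceed $1$ for small tiles, still converges to $m(t)$ with the right normalization; I believe it does, and this is likely the route the authors take since it parallels their treatment of $\xi_k$.
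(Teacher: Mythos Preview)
Your primary approach is correct and is essentially the paper's proof: the paper bounds the number of sub-edges of $L$ at stage $k$ by $\lambda^{k}\ell(L)/\mathfrak{d}$ (where $\mathfrak{d}$ is the minimum prototile edge length), multiplies by a constant $C$ coming from finiteness of $\p$ to bound the number of incident (tile, vertex) pairs, and divides by $\bv^{T}A^{k}\uno\sim\lambda^{2k}\,\bv^{T}v_{R}$, getting $\rho_{k}(L)=O(\lambda^{-k})\to 0$. Your length argument (``each sub-edge has length at least $\lambda^{-1}$ times the shortest prototile edge'') is exactly this $\mathfrak{d}$-bound.

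One correction worth noting: your guess about the paper's route is backwards. The paper does \emph{not} first prove $\rho_{k}(t)\to m(t)$ and then deduce the edge lemma via a $\delta$-neighborhood argument; it proves the edge lemma directly (your primary approach) and then \emph{uses} it to establish $\rho_{k}(t)\to m(t)$, precisely because $\rho_{k}(t)$ picks up boundary contributions from tiles outside $t$ whose vertices land on $\partial t$, and those contributions are controlled by Lemma~\ref{rhoedge}. Your alternative would therefore be circular as stated. (Also, the formula you wrote for $\rho_{k}(t)$ in the alternative, $\frac{|\bv(t)|\,\uno^{T}A^{k-j}e_{s}}{\bv^{T}A^{k}\uno}$, should be $\frac{\bv^{T}A^{k-j}e_{s}}{\bv^{T}A^{k}\uno}$, since different sub-tiles may have different vertex counts.)
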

\begin{proof}
We first observe that if $L$ is the edge of some tile $t\in\T_j$, then the substitution divides $L$ into some number of subedges - for $k>j$, we denote this set of subedges as $\omega^{k-j}(L)$. If $\mathfrak{d}$ is the minimum edge length among all prototiles, then the number of elements of $\omega^{k-j}(L)$ is bounded by $\lambda^k\ell(L)/\mathfrak{d}$, where $\ell(L)$ denotes the length of $L$. 

Next, we know that $\rho_k$ assigns one point mass to $L$ for each tile vertex which intersects $L$, and then divides by the total number of vertices with multiplicity of tiles in $\T_k$. Because there are only a finite number of prototiles, it is possible to find $C>0$ such that the number of such point masses is bounded by $C\cdot |\omega^{k-j}(L)|$. Together with the previous paragraph, this implies that
\begin{eqnarray*}
\rho_k(L) &\leq&\frac{1}{\bv^T A^k\uno}\frac{C\lambda^k\ell(L)}{\mathfrak{d}} \\
          &\to&\frac{C\ell(L)}{\mathfrak{d}}\frac{\lambda^k}{\lambda^{2k}\bv^Tv_Rv_L^T\uno}\\
          &\to& 0
\end{eqnarray*}
\end{proof}
\begin{lem}
If $t\in\T$ then the sequence $\{\rho_k(t)\}$ converges to $m(t)$.
\end{lem}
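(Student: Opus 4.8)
The plan is to rerun the $\xi_k$ argument, now counting vertices with multiplicity rather than tiles, and to use Lemma \ref{rhoedge} to discard the vertices lying on the boundary of $t$. Fix $t\in\T_j$; then $t$ is a copy of $\lambda^{-j}p_s$ for some $s$, so $m(t)=v_L(s)/\lambda^{2j}$. For $k>j$ the patch $\T_k$ refines $\T_j$: the tiles of $\T_k$ contained in $t$ are exactly the tiles of the patch $\lambda^{-k+j}\omega^{k-j}(t)$, and, exactly as the tile count $\uno^T A^{k-j}e_s$ was read off in the $\xi_k$ proof, the number of vertices of this patch counted with multiplicity is $\bv^T A^{k-j}e_s$.

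First I would split the double sum defining $\rho_k(t)$ according to whether a tile $t'\in\T_k$ is contained in $t$ or merely meets $t$ along $\partial t$. Tiles of the first kind contribute exactly $\bv^T A^{k-j}e_s$. For a tile $t'$ of the second kind, the interior of $t'$ is disjoint from the interior of $t$, so every vertex of $t'$ lying in $t$ must lie on one of the finitely many edges $L$ of $t$; hence, after dividing through by $\bv^T A^k\uno$, the total contribution of such tiles is bounded by $\sum_{L}\rho_k(L)$. This gives
\[
0\ \le\ \rho_k(t)-\frac{\bv^T A^{k-j}e_s}{\bv^T A^k\uno}\ \le\ \sum_{L\text{ edge of }t}\rho_k(L),
\]
whose right-hand side tends to $0$ by Lemma \ref{rhoedge}. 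So $\{\rho_k(t)\}$ has the same limit as $\{\bv^T A^{k-j}e_s/\bv^T A^k\uno\}$.

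That limit is the by-now-routine Perron--Frobenius computation: writing $\bv^T A^k\uno=\bv^T A^j A^{k-j}\uno$, dividing numerator and denominator by $\gamma^{k-j}$, and using $\gamma^{-m}A^m\to v_R v_L^T$,
\[
\frac{\bv^T A^{k-j}e_s}{\bv^T A^j A^{k-j}\uno}\ \longrightarrow\ \frac{\bv^T v_R v_L^T e_s}{\bv^T A^j v_R v_L^T\uno}\ =\ \frac{(\bv^T v_R)\,v_L(s)}{\gamma^j(\bv^T v_R)}\ =\ \frac{v_L(s)}{\lambda^{2j}}\ =\ m(t),
\]
using $A^j v_R=\gamma^j v_R=\lambda^{2j}v_R$, $v_L^T\uno=1$, and $v_L(s)=m(p_s)$. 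Hence $\rho_k(t)\to m(t)$, and together with Lemma \ref{rhoedge} and Proposition \ref{convergence} this yields $\rho_k\to m$ weak-$*$ in the corollary that follows.

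The only step that needs genuine care is the geometric claim that a vertex of an ``outside'' tile $t'\in\T_k$ which lands inside $t$ must actually land on $\partial t$. This uses that $\T_k$ refines $\T_j$, which in turn follows from $\omega^k=\omega^{k-j}\circ\omega^j$ together with $\supp(\omega(p))=\lambda p$: every tile of $\T_k$ sits inside a single tile of $\T_j$, so a tile not contained in $t$ has interior disjoint from that of $t$, forcing any of its vertices in $t$ onto $\partial t$. Everything else is matrix bookkeeping or a direct citation of Lemma \ref{rhoedge}.
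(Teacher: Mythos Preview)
Your proof is correct and follows essentially the same approach as the paper's: split the vertex count into tiles contained in $t$ (giving $\bv^T A^{k-j}e_s$) plus boundary contributions that vanish by Lemma~\ref{rhoedge}, then apply the same Perron--Frobenius limit. You are simply more explicit than the paper about the sandwich inequality and the geometric reason why vertices of ``outside'' tiles must land on $\partial t$.
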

\begin{proof}
Suppose that $t\in\T_j$, that $t$ is a copy of $\lambda^{-j} p_s\in\p$, and $k>j$. Then before normalizing, $\rho_k(t)$ counts $\bv(t')$ for each $t'\in\T_k$ contained in $t$, plus some contributions from tiles not contained in $t$ whose vertices intersect $t$. Since all these additional contributions must intersect the boundary of $t$, they must be dominated by $\rho_k(\partial(t))$, which by Lemma \ref{rhoedge} converges to zero. Hence we must have that  
\begin{eqnarray*}
\lim_{k\to\infty}\rho_k(t) &=& \lim_{k\to\infty} \frac{1}{\bv^TA^k\uno}{\displaystyle\sum_{t'\subset t, t'\in\T_k}\bv(t')} =\lim_{k\to\infty}\frac{\bv^T A^{k-j} e_s}{\bv^T A^k\uno}\\
         &=&\frac{\bv^T v_Rv_L^Te_s}{\bv^T A^j v_Rv_L^T\uno}=\frac{\bv^T v_R \cdot v_L(s)}{\bv^T (\lambda^{2j} v_R)} = \frac{v_L(s)}{ \lambda^{2j}}
\end{eqnarray*}
which, as before, is equal to $m(t)$.
\end{proof}
\begin{cor}\label{Trho}
The sequence $\{\rho_k\}$ converges to $m$ weak-$*$.
\end{cor}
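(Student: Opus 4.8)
The corollary follows immediately by combining the two immediately preceding lemmas with Proposition \ref{convergence}. Indeed, the last lemma establishes exactly the hypothesis required by Proposition \ref{convergence}: for every $t\in\T$, the sequence of numbers $\{\rho_k(t)\}$ converges to $m(t)$. The plan is therefore simply to invoke Proposition \ref{convergence} with $\nu_k=\rho_k$; since each $\rho_k$ is a probability measure on $X$ (being a normalized sum of point masses), the sequence lies in the closed unit ball of the dual space, and Proposition \ref{convergence} then yields $\rho_k\to m$ weak-$*$.

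If one prefers to avoid explicitly quoting the proposition and instead reprove the passage, the key steps are as follows. First I would note that $\T$ is countable and generates the Borel $\sigma$-algebra of $X$, since every open set is a union of tiles drawn from $\T$. Next, using weak-$*$ compactness of the unit ball, extract any cluster point $\rho'$ of $\{\rho_k\}$. By the preceding lemma, $\rho'(t)=m(t)$ would follow once we know $\rho'(\partial t)=0$ for all $t\in\T$, and this is precisely what Lemma \ref{edgemeasurezero} gives us, using again that $\{\rho_k(t)\}\to m(t)$ for all $t\in\T$ (the hypothesis of that lemma). Then Lemma \ref{nucontinuity} upgrades $\rho'(t)=m(t)$ on the generating $\pi$-system $\T$, which forces $\rho'=m$ by a standard uniqueness-of-measures argument. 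Finally, a sequence in a compact space with a unique cluster point converges to that point, so $\rho_k\to m$ weak-$*$.

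There is really no obstacle here: all the analytic content—the edge-measure-zero estimate of Lemma \ref{edgemeasurezero}, the Perron--Frobenius computation of $\lim_k\rho_k(t)$, and the measure-theoretic machinery of Lemmas \ref{nucontinuity} and the proof of Proposition \ref{convergence}—has already been assembled. The only point worth stating carefully is that $\T$ is a $\pi$-system (it is closed under taking the sub-tiles produced by further substitution) generating the Borel sets, so that agreement of the two finite measures $\rho'$ and $m$ on $\T$ propagates to the whole $\sigma$-algebra; this is exactly how the proof of Proposition \ref{convergence} proceeds, and Corollary \ref{Trho} is an instance of it.
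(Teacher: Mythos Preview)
Your proposal is correct and matches the paper's approach exactly: the corollary is stated without proof because it is an immediate application of Proposition \ref{convergence} to $\nu_k=\rho_k$, the hypothesis of that proposition having just been verified by the preceding lemma. The additional paragraph re-deriving the proposition is unnecessary (and your aside that $\T$ is a $\pi$-system is not quite right, since two tiles at the same level can meet in an edge rather than a tile), but the essential one-line argument is the intended one.
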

\section{Convergence of $\{\sigma_k\}$}
To prove that $\sigma_k$ converges to $m$, we use the theory of Bratteli diagrams. For the reader's convenience we recall some of the key definitions below.

A {\em Bratteli diagram} is an infinite directed graph $B = (V, E, r, s)$, where $V$ denotes the vertex set, $E$ the edge set and $r, s$ are the range and source maps from $E$ to $V$. Furthermore, the vertex set $V$ is partitioned into disjoint finite sets $\{V_n\}_{n\in\NN}$, called {\em levels}, such that for each edge $e\in E$, $s(e)\in V_n$ implies that $r(e) \in V_{n+1}$ and also such that $r^{-1}(v)$ and $s^{-1}(v)$ are finite and nonempty for each $v\in V\setminus V_0$. The {\em path space} of $B$ is the set
\[
X_B = \{(x_i)_{i\in\NN}\mid x_i\in E, r(x_i) = s(x_{i+1}) \text{ for all } i\in\NN\}.
\]
If $E_n$ is the set of edges with $s(e)\in V_n$, then we can view $X_B$ as a subspace of the space $\prod E_n$ with each of the $E_n$ given the discrete topology.  Given a sequence $(x_1, x_2, \dots, x_n)$ with $x_1\in E_1$ and $r(x_i) = s(x_{i+1})$ for all $i$, let
\[
C(x_1, x_2, \dots, x_n) = \{(y_i)_{i\in\NN} \mid y_i = x_i \text{ for } 1\leq i \leq n\}.
\]
Sets of the above form are called {\em cylinder sets}; they are both open and closed in $X_B$ and generate the topology on $X_B$. 

Let 
\[
R_B = \{(x, y) \in X_B\times X_B \mid \text{ there exists } i\in\NN \text{ such that } x_k = y_k \text{ for all } k>i\}.
\]
Then $R_B$ is an equivalence relation on $X_B$; if $(x,y)\in R_B$ then $x$ and $y$ are said to be {\em tail equivalent}.

A Borel measure on $X_B$ is said to be {\em $R_B$-invariant} if whenever $C(x_1, x_2, \dots, x_n)$ and $C(x_1', x_2', \dots, x_n')$ are cylinder sets with $r(x_n) = r(x_n')$ then 
\[
\mu(C(x_1, x_2, \dots, x_n)) = \mu(C(x_1', x_2', \dots, x_n'))
\]

Suppose that $B = (V, E, r, s)$ is a Bratteli diagram and there exists $N\in\NN$ such that $|V_n| = N$ for all $n$. Write each of the $V_n$ as
\[
V_n = \{v_1^{(n)}, v_2^{(n)}, \dots, v_N^{(n)}\},
\]
and write $E(v, w)$ for the set of edges between vertices $v$ and $w$. Suppose that the cardinality of the set $E\left(v_j^{(n)}, v_i^{(n+1)}\right)$ depends only on $i$ and $j$. If this is all satisfied, we say that $B$ is {\em stationary}; the edges drawn between levels look the same at every level. In this case, let $A_B$ be the $N\times N$ matrix whose $(i, j)$ entry is the cardinality of the set $E\left(v_j^{(n)}, v_i^{(n+1)}\right)$; $A_B$ is called the {\em incidence matrix} of the stationary diagram. 

It is known (see for example \cite{Ha81}) that if $A_B$ is a primitive matrix, then there exists only one $R_B$-invariant probability measure $\mu_{A_B}$ on $X_B$. If $\alpha$ is the Perron-Frobenius eigenvalue of $A_B$ and $v_L$ is the left Perron-Frobenius eigenvector of $A_B$ normalized so its entries sum to 1, then 
\begin{equation}\label{invariantmeasure}
\mu_{A_B}(C(x_1, \dots, x_n)) = \alpha^{-n}v_L(r(x_n))
\end{equation}
where it is understood that $v_L$ can be seen as a function on the vertices. 

Now, let us return to our chosen primitive substitution $\omega$ on $\p$ as before. Construct from this data a stationary Bratteli diagram $B$ where at each level the vertex set is a copy of the prototile set, and there is an edge from $p_j$ to $p_i$ for each copy of $p_i$ in $\omega(p_j)$. Each edge $e$ with $s(e) = p_j$ and $r(e) = p_i$ labels a way that a copy of $\lambda^{-1}p_i$ sits inside $p_j$. The incidence matrix of this diagram $A_B$ is then therefore the same primitive matrix $A$ from the previous sections.  

Each edge in $B$ from $p_j$ to $p_i$ corresponds to a specific copy of $p_i$ in $\omega(p_j)$. Because of this, as in \cite{Ol12} we can associate to each tile $t$ in $\T_k$ a finite path $(x_1, \dots, x_k)$ in $B$. The edge $x_i$ corresponds to the scaled copy of $r(x_i)$ in $s(x_i)$ which contains $t$. Similarly, each point $x$ in $X$ is labeled by an infinite sequence of edges $(x_i)_{i\in\NN}$, and if $x$ lies on the edge of some tile then it is associated to a finite number of sequences. 


Define $\psi: X_B\to X$ by saying $\psi(x_i)_{i\in\NN}$ is the point given by the labeling $(x_i)_{i\in\NN}$. This map is continuous and finite-to-one. Furthermore, if $t$ is a tile in $\T_k$, then $\psi^{-1}(t)$ is the cylinder set given the labeling mentioned above. Recall that the measure $\mu_A$ defines a measure $\psi\mu_A$ on $X$ by the formula $\psi\mu_A(E) = \mu_A(\psi^{-1}(E))$ for any Borel set $E\subset X$. 

\begin{lem}
The measure $\psi\mu_A$ is equal to $m$.
\end{lem}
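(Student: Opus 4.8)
The plan is to show $\psi\mu_A$ and $m$ agree on the generating family $\{t : t \in \T\}$ of the Borel $\sigma$-algebra on $X$, and then invoke a standard uniqueness-of-measures argument. First I would fix $t \in \T_k$, a copy of $\lambda^{-k}p_s$ say. By construction $\psi^{-1}(t)$ is exactly the cylinder set $C(x_1,\dots,x_k)$ associated to the labeling path that leads to $t$, and its range vertex $r(x_k)$ is $p_s$. Hence by the explicit formula \eqref{invariantmeasure} for the unique $R_B$-invariant probability measure, using that the Perron-Frobenius eigenvalue of $A = A_B$ is $\gamma = \lambda^2$ (fact 5 from the Perron-Frobenius discussion),
\[
\psi\mu_A(t) = \mu_A(C(x_1,\dots,x_k)) = \gamma^{-k} v_L(r(x_k)) = \lambda^{-2k} v_L(s).
\]
On the other hand, fact 5 also tells us $v_L(s) = m(p_s)$, and since $t$ is a copy of $\lambda^{-k}p_s$ scaling areas by $\lambda^{-2k}$, we get $m(t) = \lambda^{-2k} m(p_s) = \lambda^{-2k} v_L(s)$. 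Thus $\psi\mu_A(t) = m(t)$ for every $t \in \T$.

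To conclude that $\psi\mu_A = m$ as Borel measures on $X$, I would note (as in the proof of Proposition \ref{convergence}) that $\T$ generates the Borel $\sigma$-algebra and, more importantly, that $\T$ is closed under the refinement operation $t \mapsto \lambda^{-k+j}\omega^{k-j}(t)$: the tiles of $\T_k$ contained in a fixed $t \in \T_j$ partition $t$ (up to boundary, which has Lebesgue measure zero and, by Lemma \ref{edgemeasurezero} applied to $\psi\mu_A$, also $\psi\mu_A$-measure zero since $\psi\mu_A(t') = m(t')$ for all $t'\in\T$). So the collection of tiles together with $X$ itself forms (modulo sets of measure zero) a $\pi$-system generating the Borel sets on which the two finite measures agree; both are probability measures ($\mu_A$ is, $\psi$ preserves total mass, and $m(X)=1$), so they coincide.

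The one point requiring a little care — and the main obstacle — is the bookkeeping identifying $\psi^{-1}(t)$ with the correct cylinder set and its range vertex with the prototile type of $t$. Concretely: a tile $t \in \T_k$ arises by following $k$ nested substitution steps, and the edge $x_i$ records which scaled copy of $r(x_i)$ inside $s(x_i)$ one descends into at step $i$; the innermost prototile type $r(x_k)$ is by definition the type $p_s$ with $t$ a copy of $\lambda^{-k}p_s$. Points on edges of tiles are labeled by finitely many sequences, but there are only finitely many such boundary edges at each level and they carry $m$-measure (hence, by the above, $\psi\mu_A$-measure) zero, so they do not affect the measure computation. Once this dictionary is in place the rest is the routine $\pi$-$\lambda$ argument sketched above.
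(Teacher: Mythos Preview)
Your overall strategy---show that $\psi\mu_A$ and $m$ agree on every $t\in\T$ and then conclude equality---is the same as the paper's. However, there is a genuine gap in the execution.

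The assertion that $\psi^{-1}(t)$ is \emph{exactly} the cylinder $C_t = C(x_1,\dots,x_k)$ is not correct. A point on $\partial t$ also lies in a neighbouring tile $t'\in\T_k$, and therefore has preimages in the cylinder $C_{t'}$ that do not belong to $C_t$. The accurate containments are
\[
C_t \;\subseteq\; \psi^{-1}(t) \;\subseteq\; C_t \cup \psi^{-1}(\partial t),
\]
so the identity $\psi\mu_A(t) = \mu_A(C_t)$ is only available \emph{after} one knows $\mu_A\bigl(\psi^{-1}(\partial t)\bigr) = 0$.

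Your attempt to supply this via Lemma~\ref{edgemeasurezero} is circular: to apply that lemma to (the constant sequence) $\psi\mu_A$ you must already have the hypothesis $\psi\mu_A(t') = m(t')$ for all $t'\in\T$, which is exactly the equality you are trying to prove and which in turn rests on the boundary having measure zero. The paper breaks this circle by establishing $\psi\mu_A(L)=0$ for every edge $L$ \emph{directly}: it covers $\psi^{-1}(L)$ by the level-$k$ cylinders corresponding to tiles in $\T_k(L)$, notes there are $O(\lambda^k)$ such cylinders each of $\mu_A$-measure $O(\lambda^{-2k})$ by~\eqref{invariantmeasure}, and lets $k\to\infty$. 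Only after this step does the computation $\psi\mu_A(t) = \mu_A(C_t) = \lambda^{-2k}v_L(s) = m(t)$ become legitimate.
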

\begin{proof}

First, if $L$ is an edge of some tile from step $j$, and $k>j$, we have that $\psi^{-1}(L)$ is contained in $\psi^{-1}(\text{supp}(\omega^k(\T)(L)))$, which is itself contained in the union of cylinders in $X_B$ which correspond to tiles in $\omega^k(\T)(L)$. As in the proof of Lemma \ref{rhoedge} we can find $C>0$ such that the number of such cylinders is bounded by $C \ell(L)\lambda^k/\mathfrak{d}$. If $M$ is the maximum entry of the vector $v_L$, then by equation (\ref{invariantmeasure}) we have
\[
\psi\mu_A(L) \leq M\lambda^{-2k}C \ell(L)\lambda^k/\mathfrak{d}.
\]
This is true for all $k>0$, so $\psi\mu_A(L) = 0$.

Now, suppose $t$ is a tile. Then $\psi^{-1}(t)$ is the disjoint union of the cylinder which corresponds to $t$ and a subset of $\psi^{-1}(\partial t)$. The measure of the latter is zero, and so $\psi\mu_A$ agrees with $m$ on elements of $\T$. So, as before, they must be equal.
\end{proof}

We can now prove convergence of $\{\sigma_k\}$.

\begin{prop}\label{Tsigma} The sequence $\{\sigma_k\}$ converges to $m$ weak-$*$.
\end{prop}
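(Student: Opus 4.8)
The plan is to lift the problem to the path space $X_B$ of the Bratteli diagram $B$ and exploit the uniqueness of the $R_B$-invariant probability measure. Concretely, I will construct probability measures $\tau_k$ on $X_B$ with $\psi\tau_k=\sigma_k$, show that every weak-$*$ cluster point of $\{\tau_k\}$ is $R_B$-invariant and hence equals $\mu_A$, deduce that $\tau_k\to\mu_A$, and finish with $\sigma_k=\psi\tau_k\to\psi\mu_A=m$, using continuity of $\psi$ together with the previous lemma. Since $\psi$ is finite-to-one, for each $v\in\bv(\T_k)$ the fibre $\psi^{-1}(v)$ is a finite nonempty subset of $X_B$, so I can set
\[
\tau_k=\frac{1}{|\bv(\T_k)|}\sum_{v\in\bv(\T_k)}\frac{1}{|\psi^{-1}(v)|}\sum_{y\in\psi^{-1}(v)}\delta_y .
\]
This is a Borel probability measure on the compact space $X_B$, and since $\psi(y)=v$ for every $y\in\psi^{-1}(v)$, a direct check gives $\psi\tau_k=\sigma_k$.

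The heart of the argument is to compute $\tau_k$ on cylinder sets up to a negligible error. Fix a cylinder $C=C(x_1,\dots,x_n)$ and let $t_C\in\T_n$ be the corresponding tile, a copy of $\lambda^{-n}p$ with $p=r(x_n)$; because the substitution is applied consistently, the subdivision of any level-$n$ tile of a fixed prototile type into $\T_k$-tiles is congruent for all such tiles. For $k>n$, a path $y\in\psi^{-1}(v)$ lies in $C$ precisely when the nested tiles it labels pass through $t_C$. If $v$ lies in the interior of $t_C$ then every tile touching $v$ is contained in $t_C$, so all of $\psi^{-1}(v)$ lies in $C$ and $v$ contributes exactly $1$ to $|\bv(\T_k)|\,\tau_k(C)$; if $v\notin\overline{t_C}$ it contributes $0$; and if $v\in\partial t_C$ it contributes some number in $[0,1]$. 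The number of vertices of $\T_k$ lying on $\partial t_C$ is $O(\lambda^k)$ by the estimate in the proof of Lemma \ref{rhoedge} (the perimeter of $t_C$ is fixed while the edge lengths in $\T_k$ are of order $\lambda^{-k}$), and $|\bv(\T_k)|\ge c\lambda^{2k}$ for some $c>0$ (as follows from Perron-Frobenius and the fact that only boundedly many tiles meet at any vertex); hence
\[
\tau_k(C)=\frac{\bigl|\bv(\T_k)\cap\overline{t_C}\bigr|}{|\bv(\T_k)|}+o(1) .
\]
By the congruence remark, $|\bv(\T_k)\cap\overline{t}|$ depends only on the prototile type of $t\in\T_n$, i.e.\ only on $r(x_n)$, so $\tau_k(C(x_1,\dots,x_n))-\tau_k(C(x_1',\dots,x_n'))\to 0$ whenever $r(x_n)=r(x_n')$.

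To conclude: $X_B$ is compact, so the set of Borel probability measures on it is weak-$*$ compact and $\{\tau_k\}$ has cluster points, each a probability measure. Cylinder sets are clopen, so for any cluster point $\tau$ the previous paragraph yields $\tau(C(x_1,\dots,x_n))=\tau(C(x_1',\dots,x_n'))$ whenever $r(x_n)=r(x_n')$; that is, $\tau$ is $R_B$-invariant. Since $A$ is primitive, $\mu_A$ is the only $R_B$-invariant probability measure, so $\tau=\mu_A$; thus $\mu_A$ is the sole cluster point of $\{\tau_k\}$, whence $\tau_k\to\mu_A$ weak-$*$, and by continuity of $\psi$ and the previous lemma $\sigma_k=\psi\tau_k\to\psi\mu_A=m$. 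I expect the cylinder computation to be the real obstacle: one must verify carefully that the number of \emph{interior} vertices of a level-$n$ tile depends only on its prototile type and that the boundary discrepancy is of strictly smaller order than $|\bv(\T_k)|$. That is exactly the point at which the difficulty of counting shared vertices is absorbed, and it is the reason the argument must go through the Bratteli diagram rather than through the matrix $A$ alone.
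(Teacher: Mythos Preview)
Your argument is correct and lands on the same core idea as the paper---reduce to uniqueness of the $R_B$-invariant probability measure on $X_B$---but the organization is genuinely different. The paper first takes a cluster point $\sigma$ of $\{\sigma_k\}$ on $X$, shows that edges are $\sigma$-null by sandwiching $\sigma_k$ between constant multiples of $\rho_k$ and invoking Corollary~\ref{Trho}, then pulls $\sigma$ back through $\psi$ (using that $\psi$ is injective off a $\sigma$-null set) to obtain an $R_B$-invariant measure on $X_B$. You instead lift each $\sigma_k$ to a probability measure $\tau_k$ on $X_B$ up front, and prove the asymptotic invariance $\tau_k(C)-\tau_k(C')\to 0$ on cylinders by a direct count: interior vertices of a level-$n$ tile depend only on its prototile type, boundary vertices are $O(\lambda^k)$ by the estimate from Lemma~\ref{rhoedge}, and $|\bv(\T_k)|\ge c\lambda^{2k}$ because vertex multiplicity is bounded by $2\pi/\theta_{\min}$ where $\theta_{\min}$ is the smallest prototile vertex angle. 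The paper's route is shorter since it recycles Corollary~\ref{Trho}; yours is more self-contained, makes no use of the $\rho_k$ convergence, and sidesteps the pullback construction entirely by working upstairs from the start. Both arguments ultimately hinge on the same two facts: congruence of the interior subdivision for tiles of a fixed type, and the one-dimensional growth of boundary contributions versus two-dimensional growth of the total vertex count.
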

\begin{proof}
Let $\sigma$ be a cluster point for $\{\sigma_k\}$, say $\sigma = \lim \sigma_{n_i}$. 

As before our first step is to show that the $\sigma$ measure of an edge is zero. For this, notice that the measures $\sigma$, $\sigma_k$ and $m$ can be seen as elements of the dual of $C(X)$. Also, (as in \cite{Ol12}) for all $k$ and all $f\in C(X)$, $\sigma_k(f)$ is between two constant multiples of $\rho_k(f)$ and so, since $\{\rho_k\}$ converges to Lebesgue measure, we have that $\sigma(f)$ is between two constant multiples of $m(f)$ for all $f\in C(X)$. Now, since the continuous functions are dense in $L^1(m)$, this last statement implies that $\sigma << m$ and hence the $\sigma$ measure of an edge is zero.

By Lemma \ref{nucontinuity}, $\sigma_{n_i}(t)$ converges to $\sigma (t)$ for all $t \in \T$. Also, we know that if $t$ and $t'$ are two tiles of the same type created at the same level $s$, then for $k > s$ we have  $\sigma_k(t) = \sigma_k(t')$. Thus $\sigma(t) = \sigma(t')$. 

Now, suppose that $\eta$ is a pullback of $\sigma$, that is, $\psi\eta = \sigma$ ($\eta$ exists because the set on which $\psi$ fails to be one-to-one has $\sigma$-measure zero). Then the above paragraph implies that whenever $C$ and $C'$ are cylinder sets which end at the same vertex, then we must have that $\eta(C) = \eta(C')$, that is, $\eta$ is $R_B$-invariant (this is because cylinders ending at the same vertex correspond to tiles of the same type created at the same level). Since $\mu_A$ is the only $R_B$-invariant measure we obtain that $\eta= \mu_A$ and hence $\psi\eta= \psi\mu_A$ and  $\sigma= m$ as desired.
\end{proof}
\begin{proof}[Proof of Theorem \ref{maintheorem}]
This combines Corollary \ref{Txi}, Corollary \ref{Trho} and Proposition \ref{Tsigma}. 
\end{proof}

\renewcommand{\baselinestretch}{1}
\scriptsize

\bibliography{C:/Users/Charles/Dropbox/Research/bibtex}{}
\bibliographystyle{plain}

\vspace{1.5pc}

Daniel Gon\c{c}alves and Charles Starling - Departamento de Matem\'atica, Universidade Federal de Santa Catarina, 88040-900 Florian\'opolis SC, Brazil. 

E-mails: Daniel Gon\c{c}alves \texttt{daemig@gmail.com}, Charles Starling \texttt{slearch@gmail.com}.

\vspace{0.5pc}

\end{document}